\theoremstyle{plain}
\newtheorem{theorem}{Theorem}
\newtheorem{lemma}[theorem]{Lemma}
\theoremstyle{definition}
\newtheorem*{definition}{Definition}
\newtheorem{example}[theorem]{Example}
\newtheorem{remark}[theorem]{Remark}
\renewcommand{\P}{{\mathbb P}}
\newcommand{\G}{\mathbb G}
\newcommand{\m}{\mathfrak m}
\newcommand{\spun}[1]{\left\langle {#1}\right\rangle}
\title{Additive structures on projective hypersurfaces}
\author{Ivan Bazhov} \thanks{The author was supported by Dynasty foundation, RFBR no. 12-01-31342 mol-a, and by The ministry of education and science of Russian federation (project 8214)}
\email{ibazhov@gmail.com}
\subjclass[2010]{13H, 14L30}
\keywords{Algebraic variety, unipotent group, multilineal form, projective cubic}
\title{Additive structures on cubic hypersurfaces}
\date{}
\begin{document}
\begin{abstract}
By \emph{an additive structure on a hypersurface} $S$ in $\mathbb P^{n}$ we mean an effective 
action of commutative unipotent group on $\mathbb P^{n}$ which leaves $S$ invariant and acts on $S$ with an open orbit. It is known that these structures correspond to pairs $(R,H)$ of local algebra $R$ of dimension $n+1$ and a hyperplane $H$ in the maximal ideal $\m$ of $R$. We show when a projective hypersurface of degree $3$ has an additive structure and when structure is unique.
\end{abstract}
\maketitle{}


The previous works of the author was about toric varieties --- varieties with an open and dense orbit of an algebraic torus $T=(\G_m)^n$. Toric varieties contains two ideas: action of algebraic group and combinatorial description in terms of fans. The paper \cite{BBF} of the author is devoted to the second idea, this paper is of the first one, which has many different ways to develop (\cite{A1,A2,A7,A11,A18,A23}). 

The generalization that we have deal with consists of replacing the acting on a variety with an open orbit torus $T=(\G_m)^n$ by the group $\G=(\G_a)^n$, where $\G_m$ and $\G_a$ are the multiplicative and additive groups of the base field $\Bbbk$. Theory of $\G$-action with an open orbit can be regarded as an additive analogue of toric geometry. But there are certain differences. It is easy to show that $\G$-action with an open orbit on an affine variety is always transitive \cite[Section 1.3]{A20}. Hence we can not cover a variety with $\G$-invariant charts. It is well-known that any toric variety has a finite number of torus orbits and  and that an isomorphism between toric varieties as algebraic varieties provides their isomorphism in the category of toric varieties \cite[Th. 4.1]{A6}. There are examples when this do not hold in additive case \cite{A13}.

In \cite{A13} an algebraical description of locally-transitive $\G$-action on a projective space is given. Namely, it is shown there that such actions correspond to local finite-dimensional algebras. In \cite{AS} this correspondence is studied deeper: $\G$-actions on projective hypersurfaces are investigated, they correspond to pairs $(R,H)$, where $H\subset\m$ is a hypersurface in a maximal ideal $\m\subset R$.  Using \cite[Theorem 2.14]{A13} and ides of \cite{AS} and of \cite{A21} for hypersurfaces one can show that the following theorem holds.

\setcounter{theorem}{-1}
\begin{theorem}
\label{th_0}
There exists a correspondence between locally-transitive action of $\G^{n-1}_a$ on (normal)
hypersurfaces in $\mathbb P^{n}$ and isomorphisms classes of pairs $(R,H)$, where $R$ is a local algebra of dimension $n+1$ with the maximal ideal $\m$ and a hyperplane $H$ of $\m$, which elements generate algebra $R$.\qed
\end{theorem}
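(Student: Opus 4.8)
The plan is to produce mutually inverse maps between the two sides, reusing the correspondence for $\P^{n}$ of \cite[Theorem 2.14]{A13} and adding the bookkeeping of the hyperplane datum that is forced on us because the acting group now has dimension $n-1$ instead of $n$; the whole argument runs parallel to \cite{AS}, with the ideas of \cite{A21} supplying the input about normality.

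\emph{From a pair to an action.} Given $(R,H)$, put $G:=\exp(H)=\bigl\{\sum_{k\ge 0}h^{k}/k!:h\in H\bigr\}$. Since $R$ is Artinian local and $\mathrm{char}\,\Bbbk=0$, the exponential is an isomorphism of algebraic groups $(\m,+)\xrightarrow{\sim}(1+\m,\,\cdot\,)$ with inverse $\log$; hence $G$ is a closed subgroup of $R^{\times}$ isomorphic to $\G^{n-1}_{a}$. Letting $G$ act on $R$ by multiplication is faithful and linear, hence gives an effective action of $\G^{n-1}_{a}$ on $\P^{n}=\P(R)$. The point $[1]$ has trivial $G$-stabilizer, so its orbit has dimension $n-1$ and its closure $S$ is an irreducible $G$-invariant hypersurface on which $G$ acts with a dense orbit; a polarization computation identifies the linear span $\spun{S}$ with $\P\bigl(\Bbbk[H]\bigr)$, so $S$ is nondegenerate in $\P^{n}$ exactly because $H$ generates $R$.

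\emph{From an action to a pair.} Conversely, let $\G^{n-1}_{a}$ act effectively on $\P^{n}$, leaving a nondegenerate hypersurface $S$ invariant with a dense orbit $\mathcal O$. Since the group is unipotent, the action lifts to a faithful representation on $V:=\Bbbk^{n+1}$ by commuting unipotent operators; let $\mathfrak n\subset\mathrm{End}(V)$ be its Lie algebra (of dimension $n-1$), and let $R:=\Bbbk[\mathfrak n]$ be the commutative local subalgebra generated by $\mathrm{Id}$ and $\mathfrak n$, with maximal ideal $\m$ equal to the ideal of nilpotent operators it contains. Choose $v_{0}\in V$ lying over a point of $\mathcal O$. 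The evaluation map $R\to V$, $\varphi\mapsto\varphi(v_{0})$, is surjective because $R\,v_{0}=V$ by nondegeneracy, and injective because $\ker\varphi$ is $R$-stable and contains $v_{0}$; thus it is a linear isomorphism, and so $R$ is a local algebra of dimension $n+1$ while $H:=\mathfrak n$ is a hyperplane of $\m$ generating $R$. Transporting the action along $R\cong V$ recovers the multiplication action of $\exp(H)$ on $\P(R)$, so the two constructions are mutually inverse. Moreover two pairs give $\mathrm{PGL}_{n+1}$-equivalent actions precisely when they are isomorphic: an equivariant isomorphism lifts to a linear map that fixes the unit and intertwines the multiplications, hence is an algebra isomorphism, and it sends $\exp(H)$ to $\exp(H')$, so $H$ to $H'$; the converse is immediate. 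This is the same dictionary as in \cite{A13} and \cite{AS}.

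\emph{Main obstacle.} The step I expect to require genuine work is the normality clause: checking that the hypersurface $S=\overline{\exp(H)\cdot[1]}$ attached to a pair is normal, and conversely that every normal hypersurface carrying such an action arises this way. Controlling the singularities of $S$ along the complement of the open orbit is not formal, and this is exactly where the ideas of \cite{A21} enter; the rest is the Hassett--Tschinkel correspondence of \cite[Theorem 2.14]{A13} together with the hyperplane bookkeeping of \cite{AS}.
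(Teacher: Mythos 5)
The paper states Theorem~\ref{th_0} with no proof at all (it is quoted with a \verb|\qed| and attributed to \cite[Theorem 2.14]{A13}, \cite{AS} and \cite{A21}), so there is no in-paper argument to compare against; your sketch is precisely the Hassett--Tschinkel dictionary those references supply --- exponentiate $H$ to a copy of $\G_a^{n-1}$ acting on $\P(R)$ by multiplication, and conversely recover $R$ from the commuting nilpotent Lie algebra via the cyclic vector over the open orbit --- and it is essentially correct as a sketch. The one point you flag as the main obstacle, the normality clause, is indeed the weak spot and is not settled by the paper either; in fact the paper's own example $x_0(x_0y_0+x_1y_1)+x_1^3=0$ in $\P^3$ is singular along the whole line $x_0=x_1=0$ and hence not normal, so the parenthetical ``(normal)'' in the statement should rather be read as ``reduced and irreducible'' (which is what your argument actually uses, namely that $S$ equals the closure of the open orbit).
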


Investigation of $\G$-action on hypersurfaces is a complicated question. Description of $\G$-action with an open orbit on degenerated quadric was given in \cite{A21}: for every $n$ there is unique up to automorphism action. In the recent preprint \cite{AP} authors proved that additive structures on projective hypersurfaces of degree $d$ are in bijection with invariant $d$-linear symmetric forms on $(n+1)$-dimensional local algebras: the corresponding form is the polarization of the equation deﬁning the hypersurface.

We study the case of cubic projective hypersurfaces. Our goal is to find all cubics which has $\G$-action with an open orbit. We will show that if a cubic has such action than one can chooses coordinates such that the equation of the cubic has certain form. We also give an algorithm which allows to define if such form exists. We also show when $\G$-action with an open orbit is unique.

The author is grateful to I.\,Arzhantsev and to A.\,Popovsky for useful discussions, references and bright ideas.

\section*{Main result}

A given effective action of $\G=\mathbb G_a^{n-1}$ on $\mathbb P^{n}$ which leaves hypersurface $S$ invariant and acts on $S$ with an open orbit we will call an \emph{additive structure} on hypersurface $S$. We are going to find out which cubic $S$ admits an additive structure and when structure is unique.

\begin{definition}
A cubic form $Q(x_1,\ldots,x_k)$ in $k$ variables is called \emph{non-degenerated} if for any $\alpha_1,\ldots\alpha_k$ not always equal to 0 holds the following equation: 
$$
\alpha_1Q'_{x_1}(x_1,\ldots,x_k)+\ldots+\alpha_kQ'_{x_k}(x_1,\ldots,x_k)\neq0.
$$
\end{definition}

In other words a form $Q(x_1,\ldots,x_k)$ is non-degenerated if there is no linear transform of variables reducing the number of variables in $Q$.

\begin{theorem}
\label{Uno}
A cubic surface $Q=0$ in $\P^n$ admits an additive structure if and only if one can choose homogeneous variables 
$$
x_0,x_1,\dots,x_k,\ldots,x_s,y_0,y_1,\ldots,y_k,\ldots,y_{n-s-1}
$$ in $\P^n$, $(1\leq k\leq s\leq n-k-1)$, such that $Q$ has the following form
\begin{equation}
\label{eq_Q}
Q=x_0^2y_0+x_0(x_1y_1+\ldots+x_ky_k)+x_0(x_{k+1}^2+\ldots+x_s^2)+Q_3(x_1,\ldots,x_k),
\end{equation}
where $Q_3$ in a non-degenerated form in $k$ variables.

Moreover, an additive structure is unique if and only if $Q$ in non-degenerated in $n+1$ variables, i.e., $k+s+1=n$.
\end{theorem}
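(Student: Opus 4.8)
\emph{Reduction to local algebras.} The plan is to turn the problem into one about finite-dimensional local algebras. By Theorem~\ref{th_0} together with the description of the defining equation as a polarization (\cite{AP}), an additive structure on a cubic $S=\{Q=0\}\subset\P^{n}$ is the same as an isomorphism class of pairs $(R,H)$, where $R$ is a local $\Bbbk$-algebra with $\dim R=n+1$ and $H\subset\m$ is a hyperplane generating $\m$ as an algebra; writing $\lambda\in R^{*}$ for the functional whose kernel is the hyperplane $\Bbbk\cdot 1\oplus H\subset R$, the polynomial $Q$ is, up to a linear change of coordinates, the polarization of $q_{\lambda}(r)=\lambda(r^{3})$ on $\P^{n}=\P(R)$. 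I would first record which constraints the defining properties impose: since an orbit is irreducible, ``acts on $S$ with an open orbit'' forces $S$ to be irreducible, and this unwinds to $\m^{2}\neq0$ and $\m^{3}\not\subset H$ --- otherwise $\lambda$ kills $\m^{3}$ and a short computation (below) shows $q_{\lambda}$ is divisible by the coordinate dual to $1$, so $S$ is reducible. Together with $\dim H=n-1$ these are the only features of $(R,H)$ that enter.

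\emph{Deriving the normal form.} Choose $z\in\m^{2}$ with $\lambda(z)\neq0$, rescale so $\lambda(z)=1$, so that $R=\Bbbk\cdot 1\oplus H\oplus\Bbbk z$ and $\lambda$ is the coordinate dual to $z$. Writing a point as $r=x_{0}\cdot 1+h+y_{0}z$ with $h\in H$ and expanding $q_{\lambda}(r)=3x_{0}^{2}\lambda(v)+3x_{0}\lambda(v^{2})+\lambda(v^{3})$ with $v=h+y_{0}z$, one obtains $3x_{0}^{2}y_{0}$ together with the symmetric forms $\beta(h,h')=\lambda(hh')$ and $\gamma(h,h',h'')=\lambda(hh'h'')$ on $H$ and a list of ``correction'' terms involving $\lambda(hz)$, $\lambda(z^{2})$, $\lambda(h^{2}z)$, etc. A shift of the coordinate $x_{0}$ completing the square in $x_{0}y_{0}$, followed by a depressed-cubic shift of $y_{0}$, absorbs the corrections and leaves $Q\sim x_{0}^{2}y_{0}+x_{0}\beta(h,h)+\gamma(h,h,h)$ on the $(n-1)$-dimensional space $H$. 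The heart of the argument is then the normalisation of $(\beta,\gamma)$: using that $H$ generates $\m$ one shows that $H\cap\m^{2}$ lies in the radical of $\gamma$ and is paired perfectly by $\beta$ with the subspace of $H/(H\cap\m^{2})$ on which $\gamma$ is essential; that on a $\beta$-orthogonal complement $\beta$ is a non-degenerate sum of squares on one summand and zero on the other; and that $\gamma$ restricted to the essential directions, call it $Q_{3}$, is non-degenerate --- a vanishing combination of its partials would yield a redundant generator of $\m$, contradicting minimality of $\dim\m/\m^{2}$. Picking bases accordingly produces exactly~\eqref{eq_Q}, and the inequalities $1\le k\le s\le n-k-1$ fall out of $\gamma\neq0$, $\operatorname{rk}\beta=k+s$, and $\dim H=n-1$.

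\emph{Converse and uniqueness.} Conversely, given $Q$ in the form~\eqref{eq_Q}, I would construct $R$ by hand: generators $\xi_{1},\dots,\xi_{s}$ (for $x_{1},\dots,x_{s}$) and $\eta_{1},\dots,\eta_{k}$, with the degree-two products prescribed by the quadratic part of $Q$ and the triple products $\xi_{i}\xi_{j}\xi_{l}$ ($i,j,l\le k$) by $Q_{3}$, a socle element $\zeta$ with $\lambda$ dual to it, and --- if $k+s+1<n$ --- one dual-number generator ($u^{2}=0$, $u\m=0$, $\lambda(u)=0$) for each variable $y_{k+1},\dots,y_{n-s-1}$ not occurring in $Q$. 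A routine check gives associativity, $\dim R=n+1$, that $H=\ker(\lambda|_{\m})$ generates $\m$, and $q_{\lambda}\cong Q$; by Theorem~\ref{th_0} this is the desired additive structure. For the ``moreover'': if $k+s+1=n$ there is no padding, and the bilinear pairing $r\mapsto(w\mapsto\lambda(rw))$ is injective (its kernel is precisely the ideal measuring the degeneracy of $Q$), so $R$ is recovered from $Q$ by Macaulay's inverse system and $H=\ker(\lambda|_{\m})$ is forced; hence $(R,H)$, and the structure, is unique up to isomorphism. If $k+s+1<n$, the unused variables can be built into the algebra inequivalently --- e.g. as dual numbers versus as a longer nilpotent chain, as in $\Bbbk[t]/(t^{5})$ against $\Bbbk[t,u]/(t^{4},tu,u^{2})$ for $n=4$, $k=s=1$ --- giving non-isomorphic pairs with the same $Q$, so the structure is not unique.

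\emph{Main obstacle.} The delicate point is the normalisation of $(\beta,\gamma)$: that $\gamma$ sees exactly the $\beta$-radical of a minimal generating subspace, and that $Q_{3}$ comes out non-degenerate, are not formal facts about multilinear forms --- they use how $H$ generates $\m$ and the nilpotent filtration of $R$ essentially, and the case $\m^{4}\neq0$ (which does occur, as the $\Bbbk[t]/(t^{5})$ example shows) requires the extra bookkeeping of the $\lambda(hz),\lambda(z^{2}),\dots$ corrections and a careful choice of $z$. A secondary obstacle is making the reconstruction of the (non-graded) local algebra $R$ from a non-degenerate cubic $Q$ precise enough to conclude uniqueness.
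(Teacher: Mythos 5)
Your overall strategy is the paper's: reduce to a pair $(R,H)$, extract $x_0$, $y_0$ and a quadratic/cubic pair of forms on $H$, normalize that pair to get (\ref{eq_Q}), build the algebra by hand for the converse, and get uniqueness/non-uniqueness from non-degeneracy versus padding. But there is a genuine error at the very first step: the defining equation of $S$ is \emph{not} the polarization of $q_{\lambda}(r)=\lambda(r^{3})$. That form is not $\G$-invariant (for $a\in H$ one gets $\lambda(ar\cdot r\cdot r)+\lambda(r\cdot ar\cdot r)+\lambda(r\cdot r\cdot ar)=3\lambda(ar^{3})\neq0$ in general) and does not vanish on the open orbit $\exp(H)$, since $\lambda(\exp(3h))=\tfrac92\lambda(h^{2})+\tfrac92\lambda(h^{3})+\ldots$. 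The correct invariant trilinear form is the graded one of Lemma \ref{pr}, with coefficients $(1,-\tfrac12,1)$ on $\widetilde Q(a,1,1)$, $\widetilde Q(a,a',1)$, $\widetilde Q(a,a',a'')$ --- equivalently the homogenization of $y_{0}(\log(r/x_{0}))$. After the normalization $b_{0}\in\m^{3}$, $y_{0}(\m^{4})=0$ (from \cite[Theorem 5.1]{AS}) the cubic you write down happens to be projectively equivalent to the true one, so the normal form survives; but as written both your reduction and the verification ``$q_{\lambda}\cong Q$'' in the converse are carried out for the wrong polynomial, and without the $b_{0}\in\m^{3}$ normalization your ``correction terms'' $\lambda(hz),\lambda(z^{2}),\dots$ cannot simply be shifted away.

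The second gap is that the heart of the argument --- the simultaneous normalization of $(\beta,\gamma)$ into $x_{1}y_{1}+\ldots+x_{k}y_{k}+x_{k+1}^{2}+\ldots+x_{s}^{2}$ and a non-degenerate $Q_{3}(x_{1},\ldots,x_{k})$ --- is asserted rather than proved, and the reasons you give are not the operative ones. The paper's Lemmas \ref{l_kers} and \ref{l_basis_H} do this by comparing the kernels $K_{2}\subset K_{3}$ of the two forms and then showing $\spun{a_{1},\ldots,a_{k}}\oplus K_{3}=H$; the key point is that if $a\notin K_{3}$ were $Q_{2}$-orthogonal to all of $K_{3}$, one picks $b,c$ with $\widetilde Q_{3}(a,b,c)\neq0$, notes $bc\in K_{3}$ because $y_{0}(\m^{4})=0$, and gets the contradiction $\widetilde Q_{2}(a,bc)\neq0$. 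Your claim that non-degeneracy of $Q_{3}$ follows from minimality of a generating set of $\m$ is not how it works: $Q_{3}$ is non-degenerate on the chosen complement because that complement is a complement to the kernel $K_{3}$, not because of a statement about $\dim\m/\m^{2}$. Finally, your uniqueness argument via Macaulay inverse systems is a different (and plausible) route from the paper's, but it presupposes that the decomposition $R=\spun{1}\oplus H\oplus\spun{b_{0}}$ is already pinned down by $Q$; the paper gets this from the observation that a non-degenerate $Q$ admits a unique $y_{0}$ with $Q'_{y_{0}}$ a perfect square (Lemma \ref{l_x0}), a step you would still need to supply.
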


Remark \ref{r_form} gives ideas how one can $Q$ bring to form (\ref{eq_Q}) (if it is possible).

\begin{example}
Due to Remark \ref{r_form} and the proof of Lemma \ref{l_x0} any equation 
$$
x_0^3+Q(x_1,\ldots,x_n)=0, 
$$
where $Q'_{x_0}=0$, can not be represented in the form (\ref{eq_Q}). If the corresponding hypersurface is not a hyperplane then it does not have an additive structure.\hfill$\diamond$
\end{example}

\begin{example}
A surface given by the following equation provides a cubic with an additive structure.
$$
x_0(x_0y_0+x_1y_1)+x_1^3=0.
$$
If the dimension of the ambient space is $3$ then the $\mathbb G$-action is defined by
$$
\left(\begin{array}{r}
x_0\\
x_1\\
y_1\\
y_0
\end{array}\right)
\mapsto
\left(\begin{array}{r}
x_0\\
x_1+t_1x_0\\
y_1-2t_1x_1-2\left(t_2+\frac{t_1^2}{2}x_0\right)x_0\\
y_0-\frac12 t_1y_1+\left(t_2+\frac{t_1^2}{2}\right)x_1+\left(t_1t_2+\frac{t_1^3}{3}\right)x_0
\end{array}\right),
$$
ande the additive structure is defined by pair $(R,H)$:
$$
R=\Bbbk[a]/\spun{a^4},\ H=\spun{a,a^2},\ x_0^*=1,\ x_1^*=a,\ y_1^*=-2a^2,\ y_0^*=a^3. 
$$
In dimension $4$ there are two actions defined by algebras
$$
R=\Bbbk[a,b]/\spun{a^4,b^2},\ H=\spun{a,a^2, b},\ x_0^*=1,\ x_1^*=a,\ y_1^*=-2a^2,\ y_0^*=a^3,\ x_2^*=b; 
$$
$$
R=\Bbbk[a]/\spun{a^5},\ H=\spun{a,a^2, a^5},\ x_0^*=1,\ x_1^*=a,\ y_1^*=-2a^2,\ y_0^*=a^3, x_2^*=a^4. 
$$
\hfill$\diamond$
\end{example}

We will prove Theorem \ref{Uno} in $4$ steps:
\begin{enumerate}
\item existence of an additive structure implies existence of form (\ref{eq_Q});
\item existence of form (\ref{eq_Q}) implies existence of an additive structure;
\item if $Q$ is non-degenerated then additive structure is unique; 
\item if $k+s+1<n$ then there are at least two additive structures.
\end{enumerate}

We assume that the base field $\Bbbk$ is algebraically closed and of characteristic $0$.

\section*{Preliminaries}
Let $Q=0$ has an additive structure. Due to Theorem \ref{th_0}, additive structure is defined by a pair $(R,H)$, hence there is a decomposition 
\begin{equation}
\label{eq_R}
R=\spun{1}\oplus H\oplus \spun{b_0},
\end{equation}
where $b_0\in R\smallsetminus\spun{1}\oplus H$ is some vector. The proof of the following lemma is due to I.\,Arzhantsev and A.\,Popovsky, see \cite[Section 3]{AP}. 
\begin{lemma}
\label{polar}
If $Q=0$ of degree $3$ has an additive structure then
$$
\widetilde{Q}(a\alpha,\alpha',\alpha'')+\widetilde{Q}(\alpha,a\alpha',\alpha'')+\widetilde{Q}(\alpha,\alpha',a\alpha'')=0,
$$
for any $\alpha,\alpha',\alpha''\in R,\ a\in H$, and $\widetilde{Q}$ is a polarization of $Q$.\qed
\end{lemma}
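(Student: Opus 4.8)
The plan is to obtain the identity by differentiating the invariance of $Q$ along one‑parameter subgroups and then polarizing. First I would make the action explicit. By Theorem~\ref{th_0} the additive structure is encoded by $(R,H)$, and (by the description behind that correspondence) $\mathbb P^n=\mathbb P(R)$ with $\G=\exp(H)\subset 1+\m$ acting on the affine cone $R$ by the \emph{linear} maps $\alpha\mapsto\exp(a)\cdot\alpha$, $a\in H$, multiplication taken in $R$; here $\exp(a)=1+a+\tfrac{a^2}{2}+\cdots$ is a finite sum because $a$ is nilpotent, and $t\mapsto\exp(ta)$ is a one‑parameter subgroup. Since $\G\cong\G_a^{\,n-1}$ is unipotent and $\mathrm{char}\,\Bbbk=0$, it admits no nontrivial character; hence the one‑dimensional representation by which $\G$ rescales the cubic form $Q$ cutting out the cone over $S$ is trivial, and $Q(\exp(a)\cdot\alpha)=Q(\alpha)$ for every $\alpha\in R$ and $a\in H$. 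Establishing this \emph{strict} (not merely projective) invariance is the only substantive point and the step I would treat most carefully.

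Next I would differentiate. Fix $a\in H$ and $\alpha\in R$; the function $t\mapsto Q(\exp(ta)\cdot\alpha)$ is constant. Since $\tfrac{d}{dt}\big|_{t=0}\exp(ta)\cdot\alpha=a\alpha$, and since for the symmetric trilinear polarization $\widetilde Q$ (with $Q(v)=\widetilde Q(v,v,v)$) the differential of $Q$ at $v$ is $h\mapsto 3\,\widetilde Q(v,v,h)$, the chain rule gives
\[
0=\frac{d}{dt}\Big|_{t=0}Q(\exp(ta)\cdot\alpha)=3\,\widetilde Q(\alpha,\alpha,a\alpha),
\]
so $\widetilde Q(\alpha,\alpha,a\alpha)=0$ for all $\alpha\in R$ and $a\in H$.

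Finally I would polarize in $\alpha$. For fixed $a$ the map $\alpha\mapsto\widetilde Q(\alpha,\alpha,a\alpha)$ is a cubic form vanishing identically, so every coefficient of its full polarization vanishes; substituting $\alpha\rightsquigarrow\alpha+\alpha'+\alpha''$ and extracting the coefficient of the term trilinear in the three summands gives $\widetilde Q(\alpha,\alpha',a\alpha'')+\widetilde Q(\alpha,\alpha'',a\alpha')+\widetilde Q(\alpha',\alpha'',a\alpha)=0$, and reordering arguments by the full symmetry of $\widetilde Q$ turns this into
\[
\widetilde Q(a\alpha,\alpha',\alpha'')+\widetilde Q(\alpha,a\alpha',\alpha'')+\widetilde Q(\alpha,\alpha',a\alpha'')=0 ,
\]
as claimed (characteristic $0$ being used both for the exponential and for the polarization). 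Thus the only real obstacle is the first paragraph — pinning down that $\G$ lifts to the linear action $\alpha\mapsto\exp(a)\alpha$ on the cone with $Q$ strictly invariant; after that the argument is a two‑line computation.
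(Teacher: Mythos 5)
Your proof is correct. Note that the paper itself gives no argument for Lemma \ref{polar}: it is stated with a \verb|\qed| and attributed to Arzhantsev and Popovsky \cite[Section 3]{AP}, so you are filling in a citation rather than diverging from the paper's route. Your argument is the standard (and surely intended) one: under the Hassett--Tschinkel correspondence the action lifts to the linear action $\alpha\mapsto\exp(a)\alpha$ on $R$, strict invariance of $Q$ follows because a connected unipotent group in characteristic $0$ has no nontrivial characters, and then differentiating along $t\mapsto\exp(ta)$ and polarizing the resulting cubic identity $\widetilde Q(\alpha,\alpha,a\alpha)=0$ gives exactly the stated relation. The only point you gloss over slightly is why $Q\circ g$ is \emph{proportional} to $Q$ in the first place: invariance of the zero set only gives equality of radicals, so one should note that a connected group fixes each irreducible factor of $Q$ up to scalar (it cannot permute them), whence $Q\circ g=c(g)Q$ with $c$ a character; after that your triviality-of-characters argument applies. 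You correctly identify this strict-versus-projective invariance as the one substantive step, and the differentiation and polarization computations are accurate.
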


\begin{lemma} 
\label{pr}
If hypersurace $Q=0$, $\mathrm{deg}\,Q=3$, has an additive structure given by a pair $(R,H)$ then the polarization $\widetilde{Q}$ is defined by 
$$
\widetilde Q(a,1,1)=Ay_0(a),
$$
$$
\widetilde Q(a,a',1)=-\frac12 Ay_0(aa'),
$$
$$
\widetilde Q(a,a',a'')=Ay_0(aa'a''),
$$
where $a,a',a''\in\m$, $A\in\Bbbk$ is a number, and $y_0$ is defined by: 
$$
y_0(b_0)=1,\ y_0(H\oplus\spun{1})=0. 
$$
\end{lemma}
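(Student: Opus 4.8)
The plan is to extract the three formulas from the functional equation of Lemma \ref{polar} together with the decomposition \eqref{eq_R}. First I would use the defining property of an additive structure: the group $\G$ acts on $\P^n$ with an open orbit on $S=\{Q=0\}$, and under the correspondence of Theorem \ref{th_0} the ambient coordinates $x_0,\dots$ are identified with a basis of $R^*$ dual to a basis of $R$ adapted to \eqref{eq_R}, with $x_0^*=1$. The key structural input is that $S$ is $\G$-invariant, so $Q$ lies in the span of the "highest weight" coordinate; more precisely, since $R=\spun{1}\oplus H\oplus\spun{b_0}$, multiplication sends $\m\cdot\m$ into $H\oplus\spun{b_0}$ and, after a choice of $b_0$, one may normalise so that the only component of a product $aa'a''$ (for $a,a',a''\in\m$) that can be detected by the cubic is its $b_0$-component, picked out by the functional $y_0$ with $y_0(b_0)=1$, $y_0(H\oplus\spun 1)=0$. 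This is where the constant $A\in\Bbbk$ enters: it records the proportionality $\widetilde Q(b_0,1,1)=A$.

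Next I would run the following short computation. By multilinearity and symmetry of $\widetilde Q$ it suffices to evaluate $\widetilde Q$ on elements drawn from $\{1\}\cup\m$. Write any $a\in\m$ as $a=h+\lambda b_0$ with $h\in H$; then, using Lemma \ref{polar} with $\alpha=1,\alpha'=1,\alpha''=a'$ and the element $a\in H$, one gets $2\widetilde Q(a,1,a')+\widetilde Q(1,1,aa')=0$ whenever $a\in H$, and a parallel identity handles the $b_0$-direction, reducing everything to the value on $(b_0,1,1)$. Setting $A:=\widetilde Q(b_0,1,1)$ and iterating: the relation $2\widetilde Q(a,a',1)=-\widetilde Q(1,1,aa')=-\widetilde Q(aa',1,1)=-Ay_0(aa')$ gives the second formula, and applying Lemma \ref{polar} once more with three factors from $\m$ and transporting a multiplier across yields $\widetilde Q(a,a',a'')=-2\widetilde Q(aa',a'',1)=Ay_0(aa'a'')$, the third formula; the first, $\widetilde Q(a,1,1)=Ay_0(a)$, is immediate since $\widetilde Q(h,1,1)$ must vanish for $h\in H$ (otherwise $Q$ would depend linearly on a hyperplane of coordinates and we could have dropped a variable), leaving only the $b_0$-component. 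One must also check consistency: that $\widetilde Q$ defined by these three formulas is genuinely symmetric and well defined, i.e. that $y_0(aa'a'')$ is symmetric (clear, multiplication in $R$ is commutative) and that the three cases agree on overlaps — which they do by construction.

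The main obstacle I expect is not the algebra but the normalisation: one has to justify that $\widetilde Q(h,1,1)=0$ for all $h\in H$, equivalently that after the identification of Theorem \ref{th_0} the cubic $Q$ has no monomial $x_0^2 x_i$ with $x_i$ a coordinate dual to $H$. This should follow from the requirement that the $\G$-orbit in $S$ is open together with the explicit shape of the $\G$-action on $\P^n$ coming from the algebra $R$ (translations by $\exp(\mathrm{ad})$-type operators built from $H$), forcing the $x_0^2$-coefficient of $\partial Q/\partial x_i$, $i\neq 0$, to sit in the $b_0$-slot; alternatively it can be read off from \cite[Section 3]{AP}, to which Lemma \ref{polar} is already attributed, so I would simply invoke that the polarization of $Q$ is the invariant symmetric $3$-linear form on $R$ and that invariance plus \eqref{eq_R} pin down its values on $\{1,\m\}$ as stated. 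Everything else is a two-line application of Lemma \ref{polar}.
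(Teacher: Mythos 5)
Your treatment of the ``mixed'' cases is essentially the paper's: whenever at least one of the arguments drawn from $\m$ lies in $H$, Lemma \ref{polar} lets you transport that element across the slots, and the chains $2\widetilde Q(a,a',1)=-\widetilde Q(aa',1,1)$ and $\widetilde Q(a,a',a'')=-\widetilde Q(aa',a'',1)-\widetilde Q(a',aa'',1)$ are exactly the computations in the paper. But what you single out as the main obstacle --- that $\widetilde Q(h,1,1)=0$ for $h\in H$ --- is not an obstacle at all: put $\alpha=\alpha'=\alpha''=1$ in Lemma \ref{polar} to get $3\widetilde Q(h,1,1)=0$. No appeal to openness of the orbit, to the explicit shape of the $\G$-action, or to \cite{AP} beyond Lemma \ref{polar} itself is needed there.

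The genuine gap is elsewhere: the phrase ``a parallel identity handles the $b_0$-direction'' has no content. Lemma \ref{polar} only permits moving elements of $H$ across the arguments, and $b_0\notin H$, so it says nothing directly about $\widetilde Q(b_0,b_0,1)$ or $\widetilde Q(b_0,b_0,b_0)$; yet for the stated formulas to hold on all of $\m=H\oplus\spun{b_0}$ these two values must be shown to equal $-\frac12 Ay_0(b_0^2)$ and $Ay_0(b_0^3)$ respectively. The paper closes this by first normalising the complement: by \cite[Theorem 5.1]{AS} one may choose $b_0\in\m^3$, hence write $b_0=a_pa_q$ with $a_p,a_q$ basis elements of $H$ and $a_p\in\m^2$, and one has $y_0(\m^4)=0$. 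Unfolding $b_0$ into this product allows repeated application of Lemma \ref{polar} to the factors, pushing everything into terms of the form $\widetilde Q(1,1,c)=Ay_0(c)$ with $c\in\m^4$, which vanish; this is precisely the two multi-line computations in cases (2a) and (3a) of the paper's proof. Your proposal never makes the choice $b_0\in\m^3$ (nor any factorisation of $b_0$), so it has no way to reach these values, and the argument as written does not prove the lemma.
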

\begin{proof}
Due to \cite[Theorem 5.1]{AS}, we can assume $b_0\in\m^3$. The space $H$ has a basis $a_1,\ldots,a_{n-1}$ such that $b_0=a_pa_q$ for some certain $a_p\in\m^2$ and $a_q\in\m$.
It is enough to check the statement of lemma for $a_1,\ldots,a_{n-1}$ and $b_0$.
\begin{enumerate}
\item 
\begin{enumerate}
\item for $a=b_0$ put $A=\widetilde{Q}(b_0,1,1)$
\item for $a\in H$ the statement follows from Lemma \ref{polar}
\end{enumerate}
\item 
\begin{enumerate}
\item for $a=a'=b_0$ using $y_0(b_0^2)=0$ we have
\begin{multline*}
\widetilde Q(1,a_pa_q,a_pa_q)=-\widetilde Q(a_p,a_q,a_pa_q)-\widetilde Q(1,a_q,a_p^2a_q)=\\
=\frac12\widetilde Q(a_p^2,a_q,a_q)+\frac12\widetilde Q(1,1,a_p^2a_q^2)=-\widetilde Q(1,a_p^2a_q,a_q)+\frac12\widetilde Q(1,1,a_p^2a_q^2)=\\
=\frac12\widetilde Q(1,1,a_p^2a_q^2)+\frac12\widetilde  Q(1,1,a_p^2a_q^2)=\widetilde Q(1,1,a_p^2a_q^2)=0
\end{multline*}
\item in other cases, without loss of generality, $a\in H$. Lemma \ref{polar} gives
$$
\widetilde Q(a,a',1)=-\frac12\widetilde Q(aa',1,1)=-\frac12 Ay_0(aa')
$$
\end{enumerate}
\item 
\begin{enumerate}
\item for $a=a'=a''=b_0$ we have
\begin{multline*}
\widetilde Q(a_pa_q,a_pa_q,a_pa_q)=-2\widetilde Q(a_q,a_p^2a_q,a_pa_q)=\widetilde Q(a_q,a_q,a_p^3a_q)=\\
=-2\widetilde Q(a_p^3a_q^2,a_q,1)=\widetilde Q(a_p^3a_q^3,1,1)=0
\end{multline*}
\item in other cases, without loss of generality, $a\in H$. Lemma \ref{polar} gives
$$
\widetilde Q(a,a',a'')=-\widetilde Q(aa',a'',1)-\widetilde Q(a',aa'',1)=Ay_0(aa'a'').
$$
\end{enumerate}
\end{enumerate}
\end{proof}

\begin{remark}
It is easy to see that the last lemma constructs the form $Q$ from $(R,H)$ up to a multiplicative constant:
\begin{equation}
\label{eq_RQ}
(R,H)\rightsquigarrow Q(R,H).
\end{equation} 
\end{remark}

\section*{Proof ot Theorem \ref{Uno}}
\begin{proof}[\bf First Step]
To show that for cubic $Q=0$ with an additive structure the form $Q$ can be written down as form (\ref{eq_Q})
we are going to choose a suitable basis in $R$. There is decomposition (\ref{eq_R}) and we can introduce two coordinates:
$$
x_0(1)=1,\ x_0(H\oplus\spun{b_0})=0, 
$$
$$
y_0(b_0)=1,\ y_0(H\oplus\spun{1})=0.
$$

Let us define two forms which are projections to $\spun{b_0}$ of a product in algebra:
$$\widetilde{Q}_{2}: H\times H\to \Bbbk,\ (a,b)\mapsto -\frac12 y_0(ab),$$
$$\widetilde{Q}_{3}: H\times H\times H\to \Bbbk,\ (a,b,c)\mapsto y_0(abc).$$
Let $Q_2$ and $Q_3$ be the corresponding quadratic and cubic forms on $H$. 

\begin{lemma}
\label{l_kers}
If $K_2$ is the kernel of $Q_2$ and $K_3$ is the kernel of $Q_3$ then
$$K_2\subset K_3.$$
\end{lemma}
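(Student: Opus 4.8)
The plan is to unwind the definitions of $K_2$ and $K_3$ in terms of the multiplication on $R$ and the functional $y_0$, and then to use the polarization identity of Lemma \ref{polar} to push any element of $K_2$ through a triple product. Recall that, by the definitions just given, $a\in K_2$ means $\widetilde Q_2(a,b)=-\tfrac12 y_0(ab)=0$ for all $b\in H$, i.e. $y_0(ab)=0$ for every $b\in H$; and $a\in K_3$ means $y_0(abc)=0$ for all $b,c\in H$. So I must show: if $y_0(ab)=0$ for all $b\in H$, then $y_0(abc)=0$ for all $b,c\in H$.

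First I would observe that $y_0$ vanishes on $\spun{1}\oplus H$ and is nonzero only on the one-dimensional complement $\spun{b_0}$, with $b_0\in\m^3$ after the normalization of Lemma \ref{pr} (via \cite[Theorem 5.1]{AS}). The key point is that $y_0(ab)=0$ for all $b\in H$ forces $ab\in\spun{1}\oplus H$ for all $b\in H$; since $a\in H\subset\m$ and $b\in H\subset\m$, the product $ab$ already lies in $\m^2\subset\m$, hence $ab\in H$ (as $1\notin\m^2$). Thus multiplication by $a$ sends $H$ into $H$. Then for any $b,c\in H$ we have $bc\in H$ as well, so $y_0(a(bc))=0$ by the hypothesis applied to the element $bc\in H$ in the second slot. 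Wait — I should be careful: the hypothesis $a\in K_2$ is phrased with $a$ in the first slot, but $y_0(ab)=y_0(ba)$ since $R$ is commutative, so $y_0$ vanishes on $aH$ symmetrically; applying this to $b':=bc\in H$ gives $y_0(a b')=0$, which is exactly $y_0(abc)=0$. Hence $a\in K_3$.

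Alternatively, and perhaps more in the spirit of the surrounding computations, one can argue purely with the trilinear form and Lemma \ref{polar}: writing $\widetilde Q(a,b,c)=y_0(abc)$ and $\widetilde Q(a,b,1)=-\tfrac12 y_0(ab)$ from Lemma \ref{pr}, the relation $\widetilde Q(ab,c,1)+\widetilde Q(a,bc,1)+\cdots$ lets one rewrite a triple product in terms of products with $1$, reducing $y_0(abc)$ to a combination of terms of the form $y_0(a\cdot(\text{something in }H))$, each of which vanishes when $a\in K_2$. I expect the only subtlety — the "main obstacle" such as it is — to be bookkeeping: making sure that every auxiliary element that appears in the second or third slot genuinely lies in $H$ (equivalently in $\m$, away from the span of $1$), so that the $K_2$-hypothesis or the $K_3$-conclusion actually applies to it. Once the closure property "$aH\subseteq H$ for $a\in K_2$" is established, the inclusion $K_2\subseteq K_3$ is immediate.
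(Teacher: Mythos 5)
Your overall strategy is the same as the paper's: reduce the vanishing of $\widetilde Q_3(a,b,c)=y_0(abc)$ for $a\in K_2$ to the vanishing of $\widetilde Q_2(a,\cdot)$ evaluated on something manufactured from the product $bc$. But the pivotal step as you wrote it is false. From $a\in K_2$ you correctly deduce $aH\subseteq H$ (the product $ab$ lies in $\m$, so it has no component along $1$, and $y_0(ab)=0$ kills the component along $b_0$). You then assert that ``for any $b,c\in H$ we have $bc\in H$ as well'' and apply the $K_2$-hypothesis to $bc$ in the second slot. This does not hold: $b$ and $c$ are arbitrary elements of $H$, not elements of $K_2$, and the $b_0$-component of $bc$ equals $y_0(bc)=-2\widetilde Q_2(b,c)$, which does not vanish in general (the form $Q_2$ contains $x_1y_1+\cdots$, so $\widetilde Q_2(b,c)\neq 0$ for suitable $b,c$). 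Hence $bc\notin H$ in general, and the hypothesis $y_0(a\,\cdot)=0$ on $H$ cannot be applied to $bc$ directly. You do flag exactly this ``bookkeeping'' issue as the main obstacle at the end, but you then dismiss it rather than resolve it.

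The missing idea is precisely what the paper's commutative diagram encodes: the map $\phi$ is not the raw product but the product \emph{followed by the projection onto $H$ along $\spun{1,b_0}$}. Writing $bc=\mathrm{pr}_H(bc)+\lambda b_0$ with $\lambda=y_0(bc)$, one gets $y_0(abc)=y_0\bigl(a\,\mathrm{pr}_H(bc)\bigr)+\lambda\, y_0(ab_0)$. The first term vanishes because $a\in K_2$ and $\mathrm{pr}_H(bc)\in H$; the second vanishes because $b_0\in\m^3$, so $ab_0\in\m^4$ and $y_0(\m^4)=0$ by \cite[Theorem 5.1]{AS} --- a fact you cite early on but do not deploy where it is actually needed. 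With this projection-plus-$\m^4$ correction inserted, your argument closes the gap and coincides with the paper's proof; your sketched alternative via Lemma \ref{polar} is not carried out far enough to evaluate.
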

\begin{proof}
There is a commutative diagram
$$
\xymatrix{
H\times(H\times H)\ar[rr]^{1\times\phi}\ar[rd]\ar[rdd]_{Q_3}&&H\times H\ar[ld]\ar[ldd]^{Q_2}\\
&\spun{b_0}\ar[d]&\\
&\Bbbk&
}
$$
where $\phi$ is a product in algebra and the projection on $H$ along $\spun{1,b_0}$. It is easy to see that $K_2\subset K_3$.
\end{proof}

\begin{lemma}
\label{l_basis_H}
One can choose a basis $x_1,\ldots, x_s$, $y_1,\ldots,y_r$ of space $H$ such that
$Q_{2}=x_1y_1+\ldots+x_ky_k+x_{k+1}^2+\ldots+x_{s}^2$ and $Q_{3}(x_1,\ldots,x_k)$ is non-degenerated.
\end{lemma}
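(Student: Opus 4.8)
The plan is to push the entire statement onto the symmetric bilinear form $\widetilde{Q}_{2}$ on $H$ and the symmetric bilinear map $\phi\colon H\times H\to H$ (multiplication in $R$ followed by the projection onto $H$ along $\spun{1,b_0}$) that appears in the proof of Lemma \ref{l_kers}. Since $\deg Q=3$ we have $\m^{4}=0$, so the commutative diagram in the proof of Lemma \ref{l_kers} says precisely that $\widetilde{Q}_{3}(a,b,c)=-2\,\widetilde{Q}_{2}(a,\phi(b,c))$ for all $a,b,c\in H$. Two consequences will do almost all the work: putting $M:=\phi(H,H)\subseteq H$, the kernel $K_{3}$ of $Q_{3}$ equals $M^{\perp}$, the orthogonal complement of $M$ with respect to $\widetilde{Q}_{2}$; and, since every element of $M$ lies in $\m^{2}$ and $\m^{4}=0$, the form $\widetilde{Q}_{2}$ is identically zero on $M$, i.e. $M$ is totally isotropic. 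Recalling also that $K_{2}$ (the kernel of $Q_{2}$, equivalently the radical of $\widetilde{Q}_{2}$) satisfies $K_{2}\subseteq K_{3}$ by Lemma \ref{l_kers}, this exhausts what I need from the algebra; everything that follows is linear algebra of quadratic forms over the algebraically closed field $\Bbbk$.

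Next I would fix the numerics. Let $N$ be the kernel of $\widetilde{Q}_{2}|_{K_{3}}$. Since $K_{3}=M^{\perp}$ and $M$ is isotropic, $N=M+K_{2}$ and $\dim(N/K_{2})=\dim H-\dim K_{3}=:k$; this equality — the codimension of $K_{3}$ agreeing with the dimension of the radical of $\widetilde{Q}_{2}|_{K_{3}}$ modulo $K_{2}$ — is exactly where isotropy of $M$ (hence $\m^{4}=0$) enters, and it is what makes the same $k$ govern both $Q_{2}$ and $Q_{3}$. Now assemble the basis from three mutually orthogonal blocks. Choose $W\subseteq K_{3}$ complementary to $N$; then $\widetilde{Q}_{2}|_{W}$ is nondegenerate, and over $\Bbbk$ there is a basis $x_{k+1},\dots,x_{s}$ of $W$ with $Q_{2}|_{W}=x_{k+1}^{2}+\dots+x_{s}^{2}$, where $s:=k+\dim W$. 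Passing to $W^{\perp}$ one checks $W\cap K_{2}=0$, that the radical of $\widetilde{Q}_{2}|_{W^{\perp}}$ is $K_{2}$, and that $\dim(W^{\perp}/K_{2})=2k$; moreover $N\subseteq W^{\perp}$ and $N/K_{2}$ is a totally isotropic subspace of half the dimension of the nondegenerate space $W^{\perp}/K_{2}$, hence a Lagrangian in it. Pick a complementary Lagrangian, lift it to $U\subseteq W^{\perp}$ with $W^{\perp}=U\oplus N$, take a basis $x_{1},\dots,x_{k}$ of $U$ and a dual basis $y_{1},\dots,y_{k}$ in some complement $Y$ of $K_{2}$ inside $N$, and a basis $y_{k+1},\dots,y_{r}$ of $K_{2}$. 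On $H=W\oplus(U\oplus Y)\oplus K_{2}$ the form $\widetilde{Q}_{2}$ is diagonal on $W$, hyperbolic on $U\oplus Y$, and zero on $K_{2}$, so after rescaling the $y_{i}$ one gets $Q_{2}=x_{1}y_{1}+\dots+x_{k}y_{k}+x_{k+1}^{2}+\dots+x_{s}^{2}$.

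It remains to identify $Q_{3}$. Since $W\subseteq K_{3}$ and $Y,K_{2}\subseteq N\subseteq K_{3}$, the subspace $W\oplus Y\oplus K_{2}$ lies in $K_{3}$, and $\dim(W\oplus Y\oplus K_{2})=(s-k)+k+\dim K_{2}=\dim H-k=\dim K_{3}$, so $K_{3}=W\oplus Y\oplus K_{2}$. Thus $Q_{3}$ is unaffected by adding any of $x_{k+1},\dots,x_{s},y_{1},\dots,y_{r}$ to its arguments, i.e. $Q_{3}=Q_{3}(x_{1},\dots,x_{k})$; and $Q_{3}$ descends to $H/K_{3}\cong U$ as a cubic with trivial kernel, which is exactly nondegeneracy in the sense of the Definition. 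The inequalities fall out too: $k\geq1$ whenever $Q_{3}\not\equiv0$ (otherwise $K_{3}=H$), $k\leq s$ because $\dim W=\dim K_{3}-\dim N\geq0$, and $s\leq n-k-1$ because $\dim H=n-1=s+k+\dim K_{2}$. The one step I expect to be genuinely load-bearing rather than routine is the identity $N=M+K_{2}$ together with $\dim(N/K_{2})=\dim H-\dim K_{3}$ — that is, matching the parameter $k$ of the cubic $Q_{3}$ with the one governing the hyperbolic block of $Q_{2}$; this is the concrete manifestation of $\m^{4}=0$. Once it is in hand, only the standard diagonalization of $\widetilde{Q}_{2}|_{W}$ and the hyperbolic completion of the Lagrangian $N/K_{2}$ are needed.
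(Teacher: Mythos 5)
Your proof is correct and rests on the same mechanism as the paper's: the factorization $\widetilde{Q}_3(a,b,c)=-2\,\widetilde{Q}_2(a,\phi(b,c))$ encoded in the diagram of Lemma \ref{l_kers}, which forces the codimension of $K_3$ in $H$ to equal the dimension of the radical of $Q_2|_{K_3}$ modulo $K_2$. The paper gets this by exhibiting vectors $a_i$ dual to a basis of that radical and deriving a contradiction from $\spun{a_1,\ldots,a_k}\oplus K_3\neq H$; you get it more structurally from $K_3=M^{\perp}$, the isotropy of $M=\phi(H,H)$, and the double-orthogonal-complement identity $N=(M^{\perp})^{\perp}=M+K_2$. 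After that, both proofs finish with the same diagonalization of $\widetilde{Q}_2|_{W}$ and hyperbolic completion of the Lagrangian $N/K_2$, and your identification of $K_3=W\oplus Y\oplus K_2$ and of the nondegeneracy of $Q_3$ on $H/K_3$ is sound.

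One statement needs correcting: $\deg Q=3$ does \emph{not} imply $\m^4=0$ --- the paper's own second example, with $R=\Bbbk[a]/\spun{a^5}$, has $\m^4\neq0$. What is true, and what both of your uses of this claim actually require, is $y_0(\m^4)=0$ together with $b_0\in\m^3$, quoted from \cite[Theorem 5.1]{AS} in the proof of Lemma \ref{pr}: this already yields the displayed factorization of $\widetilde{Q}_3$ (the discarded terms lie in $\m^4$, on which $y_0$ vanishes) and the vanishing of $\widetilde{Q}_2$ on $M\times M$ (since $M\subset\m^2\cap H$ gives $y_0(mm')=0$). With that substitution the argument is complete.
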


\begin{proof}
There is decomposition 
$$
K_3=X\oplus Y\oplus K_2,
$$
where  $\ker(Q_2|_{K_3})=Y\oplus K_2$. The restriction of $Q_2$ to $X$ is non-degenerated and we can choose bases by the following way:
$$
X=\spun{a_{k+1},\ldots, a_{s}},\ \widetilde Q_{2}(a_i,a_j)=\delta_{ij},
$$
$$
Y=\spun{b_1,\ldots,b_k},\ K_2=\spun{b_{k+1},\ldots,b_r}.
$$
Since $b_i\notin K_2$ holds for $1\leq i\leq k$, there exists $a_i\in H\smallsetminus K_3$ such that $\widetilde Q_2(a_i,b_j)=\delta_{ij}$.

Assume that $\spun{a_1,\ldots,a_k}\oplus K_3\neq H$. In this case there exists
$a\in H\smallsetminus K_3$ such that $\widetilde Q_2(a, b)=0$ for all $b\in K_3$.
Since $a\notin K_3$, there exist two elements $b,c\in H$ such that $\widetilde Q_3(a,b,c)\neq 0$. Due to \cite[Theorem 5.1]{AS}, $y_0(\m^4)=0$, hence $y_0(ab\cdot c\cdot d)=0$ for all $c,d,\in H$. Hence $bc\in K_3$ and $\widetilde Q_2(a,bc)\neq 0$ gives a contradiction.

For decomposition
$$\spun{a_1,\ldots, a_k}\oplus\spun{b_1,\ldots,b_k}\oplus\spun{a_{k+1},\ldots,a_{s}}\oplus\spun{b_{k+1},\ldots,b_s}=H$$
and dual basis $1^*=x_0$, $x_i=a^*_i$, $y_i=b^*_i$ forms $Q_2$ and $Q_3$ are as in the statement of lemma.
\end{proof}

Due to Lemma \ref{pr}, cubic form $Q$ can be represent as
$$
Q=y_0x_0^2+x_0Q_{2}+Q_{3}.
$$
Applying Lemma \ref{l_basis_H} completes the proof of the first step.
\end{proof}

\begin{proof}[\bf  Second Step]

Basing on $Q$ of the form (\ref{eq_Q}) we construct a pair $(R, H)$ such that the correspondence (\ref{eq_RQ}) defines initial $Q$.

We have a set of variables $x_0,\ldots, x_s$, $y_0,\ldots,y_r$ (let us think that all variables not appearing in (\ref{eq_Q}) are labelled by $y_{k+1},\ldots,y_r$, $r=n-1-s$).
Set a basis:
$$R=\spun{1,a_1,\ldots,a_k, b_0,b_1,\ldots,b_r},$$
where $1^*=x_0$, $x_i=a^*_i$, $y_i=b^*_i$.
We have a space $H$ and forms $Q_2$ и $Q_3$:
\begin{equation}
\label{eq_QQ}
Q=y_0x_0^2+x_0Q_2+Q_3.
\end{equation}
The product of $a,a'\in H$ is given by
\begin{equation}
\label{eq_prod1}
aa'=\widetilde Q_2(a,a')b_0+\sum\limits_{q=1}^{k}\widetilde Q_3(a,a',a_q)b_q,
\end{equation}
and $b_0\cdot \m=0$.

It is easy to see that $Q$ coincides with a form from Lemma \ref{pr}, i.e., the correspondence defines the initial form.
\end{proof}


\begin{proof}[\bf Third Step.]
In the Second Step we have shown that if $Q$ has a form (\ref{eq_Q}) then there exists a pair $(R,H)$ giving an additive structure on $Q=0$.
Now we are going to show that if $Q$ is non-degenerated than this pair is unique. We will use Lemma \ref{pr}, which describes relation between ${Q}$ and product in algebra $R$.

\begin{lemma}
\label{l_x0}
If $Q$ is non-degenerated than decomposition (\ref{eq_R}) as a vector space is unique.
\end{lemma}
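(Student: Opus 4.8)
The plan is to show that every summand of (\ref{eq_R}) is intrinsically attached to the local algebra $R$ together with the cubic form $Q$. The summand $\spun{1}=\Bbbk\cdot 1$ is the canonical copy of the ground field and needs no argument. The coordinate $x_0$ is also canonical: it is the unique linear functional with $x_0(1)=1$ and $x_0|_{\m}=0$, and this description does not refer to $H$ at all, because $H\oplus\spun{b_0}=\m$ for every admissible decomposition. For the hyperplane $H$ I would simply read it off the polarization $\widetilde Q$ using Lemma \ref{pr}: for $a\in\m$ we have $\widetilde Q(a,1,1)=Ay_0(a)$ with $A=\widetilde Q(b_0,1,1)\ne 0$ (if $A=0$, Lemma \ref{pr} forces $\widetilde Q\equiv 0$, contradicting that $Q=0$ is a hypersurface). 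Hence on $\m$ the linear functional $a\mapsto\widetilde Q(a,1,1)$ is a nonzero multiple of $y_0$, so
\begin{equation*}
H=\{a\in\m:\widetilde Q(a,1,1)=0\},
\end{equation*}
and the right-hand side depends only on $R$, on the canonical element $1$, and on the polarization of $Q$. Thus $H$ is determined, and with it the coordinate $y_0$ up to scale.

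It remains to pin down the line $\spun{b_0}$. By \cite[Theorem 5.1]{AS} we may take $b_0\in\m^3$, so $\spun{b_0}\subseteq\m^3$; the point is that when $Q$ is non-degenerated $\m^3$ is one-dimensional, so $\spun{b_0}=\m^3$ is forced. For this I would use the multiplication computed in the Second Step: from (\ref{eq_prod1}), from $b_0\m=0$, and from the fact (Lemmas \ref{l_kers}, \ref{l_basis_H}) that $b_1,\dots,b_k\in K_3$, one gets $b_q\m\subseteq\spun{b_0}$ for $1\le q\le k$; together with the non-degeneracy of $Q_3$ in $x_1,\dots,x_k$ (which is exactly what makes the $b_q$ actually occur in products) this gives $\m^2=\spun{b_0,b_1,\dots,b_k}$, $\m^3=\spun{b_0}$ and $\m^4=0$. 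Assembling the three pieces yields the uniqueness of (\ref{eq_R}) as a vector-space decomposition.

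The step carrying all the weight is the identification $\spun{b_0}=\m^3$. The pieces $\spun 1$, $x_0$ and $H$ are forced for free by the polarization of $Q$, but $\spun{b_0}$ is only a complement of $H$ in $\m$, and it becomes canonical precisely when $\dim\m^3=1$; this is where non-degeneracy is indispensable, and where the argument genuinely breaks down in the degenerate case (for instance for $R=\Bbbk[a]/\spun{a^5}$ with $b_0=a^3$ one has $\m^3=\spun{a^3,a^4}$, and the choice of $\spun{b_0}$ inside $\m^3$ is then not canonical).
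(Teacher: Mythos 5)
Your argument characterizes the three summands in terms of the algebra $R$ --- the identity $1$, the maximal ideal $\m$, and the power $\m^3$ --- rather than in terms of the form $Q$ alone, and that is where it breaks down. The role of Lemma \ref{l_x0} inside the Third Step is to show that two a priori different additive structures on the same cubic $\{Q=0\}$ induce the same decomposition of $\Bbbk^{n+1}$; each such structure carries its own algebra structure on $\Bbbk^{n+1}$, with its own identity line and its own maximal ideal. So the line $\spun{1}$ is precisely one of the subspaces whose uniqueness has to be established, and ``needs no argument'' is where you beg the question. The same circularity propagates: your $x_0$ is defined using $1$ and $\m$; your $H$ is cut out by $\widetilde Q(\cdot,1,1)$ on $\m$, which again uses $1$ and $\m$; and your identification $\spun{b_0}=\m^3$ is computed in the multiplication of one particular algebra --- even granting $\dim\m^3=1$, you must first know that $\m$ itself is the same subspace of $\Bbbk^{n+1}$ for every additive structure on $Q=0$ before you may conclude that $\m^3$ is.

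The paper runs the argument in the opposite direction, starting from data intrinsic to the form: a direct computation shows that for non-degenerate $Q$ there is a unique direction (up to scalar) along which the derivative of $Q$ is a perfect square; this pins down $\spun{b_0}$ from $Q$ alone, then $x_0$ as the square root of that derivative (hence the hyperplane $\ker x_0=\m$), and only afterwards recovers $H$ and $\spun{1}$. Your individual observations --- that $A\neq 0$, that $H=\{a\in\m:\widetilde Q(a,1,1)=0\}$, and that $\spun{b_0}=\m^3$ when $Q_3$ is non-degenerate --- are correct consequences of Lemma \ref{pr} and would be useful \emph{after} the canonical determination of $1$ and $\m$ from $Q$, but they cannot replace that determination, which is the actual content of the lemma and the only place the non-degeneracy of $Q$ genuinely enters.
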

\begin{proof}
Direct computation shows that for non-degenerated form $Q$ there exist unique (up to multiplicative constant) $y_0$ such that $Q'_{y_0}$ is a perfect square. Hence (up to multiplicative constant) there is unique $x_0$: $x_0^2=Q'_{y_0}$.

Hyperplane $H\subset R$ can be defined as
$$
x_0(h)=y_0(h)=0.
$$
And  $\spun{1}=\spun{x_0^*}$, and $\spun{b_0}=\spun{y_0^*}$.
\end{proof}

We can put $1=x_0^*$ and $b_0=y_0^*$ in $R$ up to multiplicative constant, which has no affect. Decomposition (\ref{eq_QQ}) gives two forms $Q_2$ and $Q_3$. 

\begin{remark}
\label{r_form}
Now, it is easy to see how one can bring form $Q$ to (\ref{eq_Q}).
At first, one chooses variables $x_0$, $y_0$ and forms $Q_2$, $Q_3$. Secondly,
one brings $Q_2$ and $Q_3$ to a suitable form using ideas of Lemma \ref{l_basis_H}.
\end{remark}

\begin{lemma}
\label{lemma_2}
If form $Q$ in non-degenerated than multiplication table of $R$ is defined uniquely.
\end{lemma}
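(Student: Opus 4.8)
The plan is to exploit that multiplication in $R$ is a commutative, associative, bilinear operation with unit $1$: the whole table is then determined by the products $a\cdot b$ of elements of a basis of $\m$, and since $ab\in\m=H\oplus\spun{b_0}$ for $a,b\in\m$, it suffices to determine the $H$-component and the $b_0$-component of each such product. By Lemma~\ref{l_x0} the decomposition (\ref{eq_R}) and the functionals $x_0,y_0$ are canonically attached to $Q$, and by Remark~\ref{r_form} we may assume $Q$ is in the form (\ref{eq_Q}). I would first record the decisive consequence of non-degeneracy: the variables $y_{k+1},\dots,y_r$ do not occur in (\ref{eq_Q}), so non-degeneracy of $Q$ forces the form to involve all $n+1$ variables, hence $r=k$ (equivalently $k+s+1=n$) and $K_2=0$ in the notation of Lemma~\ref{l_kers}; in other words $\widetilde Q_2$ is a non-degenerate bilinear form on $H$. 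As in the proof of Lemma~\ref{pr} we take $b_0\in\m^3$ and use $y_0(\m^4)=0$ from \cite[Theorem 5.1]{AS}.

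Next I would show $\m^4=0$, which immediately yields $b_0\cdot\m=0$. If $h\in\m^4$, then $h\in\m$ and $y_0(h)=0$, so $h\in H$; for every $c\in H$ one has $hc\in\m^5\subset\m^4$, hence $\widetilde Q_2(h,c)=-\tfrac12 y_0(hc)=0$, and non-degeneracy of $\widetilde Q_2$ gives $h=0$. Consequently $b_0\cdot a\in\m^4=0$ for all $a\in\m$, so every product of $b_0$ with an element of $\m$ vanishes. It remains to determine $ab$ for $a,b\in H$; write $ab=h(a,b)+\lambda(a,b)\,b_0$ with $h(a,b)\in H$. Since $y_0(b_0)=1$ and $y_0$ vanishes on $\spun{1}\oplus H$, the coefficient $\lambda(a,b)=y_0(ab)$ equals $-\tfrac{2}{A}\widetilde Q(a,b,1)$ by Lemma~\ref{pr}, where $A=\widetilde Q(b_0,1,1)$, so it is determined by $Q$. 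For every $c\in H$ we have $(ab)c=h(a,b)c+\lambda(a,b)\,b_0c=h(a,b)c$, because $b_0c\in\m^4=0$; hence, by associativity and Lemma~\ref{pr},
$$
\widetilde Q_2\bigl(h(a,b),c\bigr)=-\tfrac12\,y_0\bigl(h(a,b)c\bigr)=-\tfrac12\,y_0(abc)=-\tfrac{1}{2A}\,\widetilde Q(a,b,c).
$$
Thus the linear functional $c\mapsto\widetilde Q_2(h(a,b),c)$ on $H$ is determined by $Q$, and since $\widetilde Q_2$ is non-degenerate this pins down $h(a,b)$, hence $ab$. Together with the products involving $1$ and the vanishing $b_0\cdot\m=0$, this fixes every structure constant, so the multiplication of $R$ is exactly the one built in the Second Step and is unique (the scalar normalizing $b_0$ only rescales $A$ and produces an isomorphic pair $(R,H)$).

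I expect the main obstacle to be the step $b_0\cdot\m=0$, equivalently $\m^4=0$: without it the term $\lambda(a,b)\,b_0c$ in $(ab)c$ is uncontrolled, and one can no longer recover $h(a,b)$ from the numbers $y_0(abc)$. This is precisely where non-degeneracy of $Q$ is indispensable — it forces $\m^4=0$ and, through $K_2=0$, lets us invert $\widetilde Q_2$ on $H$. When $Q$ is degenerate, the products involving $K_2$ are no longer determined by $Q$, which underlies the non-uniqueness established in the Fourth Step.
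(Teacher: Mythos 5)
Your proof is correct and follows essentially the same route as the paper: both recover the $\spun{b_0}$-component of $ab$ from $\widetilde Q_2$ (equivalently $y_0(ab)$) and the $H$-component by pairing against $\widetilde Q_2$, which is invertible on $H$ exactly because non-degeneracy of $Q$ forces $k+s+1=n$, i.e.\ $K_2=0$. The only difference is presentational — you use the functional $c\mapsto\widetilde Q_2(h(a,b),c)$ where the paper uses a $\widetilde Q_2$-dual basis $a_i^\vee$, and you spell out the steps $\m^4=0$ and $b_0\cdot\m=0$ that the paper states tersely.
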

\begin{proof}
Due to the previous lemma, there is unique decomposition (\ref{eq_R}) and forms $Q_2$, $Q_3$.
Let us choose an arbitrary basis $a_i$ in $H$, $1\leq i\leq n-1$, and write down the product in terms of this basis:
\begin{equation}
\label{eq_prod2}
aa'=\mathrm{pr}_{b_0}(aa')b_0+\sum\limits_{i=1}^{n-1}\mathrm{pr}_{i}(aa')a_i.
\end{equation}

Due to Lemma \ref{pr}
\begin{itemize}
\item $\mathrm{pr}_{b_0}(aa')=-2\widetilde{Q_2}(a,a')$
\item since $\widetilde{Q_2}:H\times H\to\Bbbk$ is non-degenerated,  for any $a_i$ there exists unique 
$a_i^\vee\in H$ such that $\mathrm{pr}_i(aa')=-2\widetilde Q_2(aa',a_i^\vee)=\widetilde Q_3(a,a',a_i^\vee)$.
\end{itemize}
It is easy to define a multiplication for $a,a'\in H$:
$$
aa'=-2\widetilde Q_{2}(a,a')b_0+\sum\limits_{i=1}^{n}\widetilde Q_3(a,a',a_i^{\vee})a_i.
$$
Since $\widetilde Q_2$ in non-degenerated, $\m^3\smallsetminus\spun{b_0}=0$. Hence $H\cdot b_0=0$.
\end{proof}

The constructed multiplication of $R$ is coincide with multiplication from the Second Step.
\end{proof}

\begin{proof}[\bf Fourth Step.]
We are going to construct an algebra different from the algebra in the Second Step. But before it we recall (hidden) ideas from the previous step:
the second summand in (\ref{eq_prod1}) and (\ref{eq_prod2}) is defined by the diagram:
$$
\xymatrix{
H\times H\ar[rd]_{f_3}&&H \ar[ld]^{f_2}\\
&H^*&
}
$$
where $f_3$ and $f_2$ is defined by $Q_3$ and $Q_2$. In the case of (\ref{eq_prod1}) the inclusion  $\mathrm{Im}\,f_3\subset\mathrm{Im}\,f_2$ was important, and for (\ref{eq_prod2}) the map $f_2$ was an isomorphism, which helps to define a product. 

As in the second step we labelled missed in $\ref{eq_Q}$ variables as $y_{k+1},\ldots,y_r$, $r=n-1-s$, i.e., we have variables $x_0,\ldots, x_s$, $y_0,\ldots,y_r$ and basis:
$$R=\spun{1,a_1,\ldots,a_k, b_0,b_1,\ldots,b_r},$$
where $1^*=x_0$, $x_i=a^*_i$, $y_i=b^*_i$.
Also we have space $H$ and forms $Q_2$, $Q_3$. 

Let us write down what happens inside of diagram and change the maps:
$$
\xymatrix{
H\times H\ar[d]_{f_3}&H \ar[d]^{f_2}&&H\times H\ar[d]_{f_3+f_3'}&H \ar[d]^{f_2+f_2'}\\
\mathrm{Im}\,f_3\ar@{^{(}->}[r]&\mathrm{Im}\,f_2&&\mathrm{Im}\,f_3\oplus\mathrm{Ker}\,f_2\ar@{^{(}->}[r]&\mathrm{Im}\,f_2\oplus\mathrm{Ker}\,f_2
}
$$
\begin{itemize}
\item $f_3': H\times H\to \mathrm{Ker}\,f_2$ such that $\mathrm{Ker}\,f_3\neq \mathrm{Ker}\,(f_3+f_3')$,
\item $f_2'$ is identity on $\mathrm{Ker}\,f_2$ such that $f_2+f_2'$ is invertible.
\end{itemize}

Hence the multiplication of $R$ is given by
$$
ab=\widetilde Q_2(a,b)b_0+\sum\limits_{q=1}^{k} \widetilde Q_3(a,b,a_q)b_q+f'_3(a,b),
$$
and $b_0\cdot \m=0$.

Since the dimensions of $\m^2$ for new and old (from second step) algebras are different, we have different addititve structures.
\end{proof}

\end{document}